\theoremstyle{plain}
\newtheorem{Thm}{Theorem}[section]
\newtheorem{Conjecture}[Thm]{Conjecture}
\theoremstyle{definition}
\newtheorem{Def*}{Definition}
\newcommand{\nim}[1]{\left<#1\right>}
\begin{document}

\title {The strange algebra of combinatorial games}

\author{Johan W\"astlund\\
\small Department of Mathematics\\[-0.8ex]
\small Chalmers University of Technology,\\[-0.8ex] \small S-412 96 Gothenburg, Sweden\\[-0.8ex]
\small \texttt{wastlund@chalmers.se}
}
\date{\small \today\\ \medskip
\small Mathematics Subject Classification: 91A46.}

\maketitle

\begin{abstract} We present an algebraic framework for the analysis of combinatorial games. This framework embraces the classical theory of partizan games as well as a number of mis\`ere games, comply-constrain games, and card games that have been studied more recently. It focuses on the construction of the quotient monoid of a game, an idea that has been successively applied to several classes of games.
\end{abstract}

\section{Introduction} \label{S:intro}
We describe an algebraic framework for the analysis of combinatorial games. Roughly speaking we generalize the classical theory of \cite{WW, ONAG}, in which games are regarded as elements in abelian groups, to a wider class of games to which we associate abelian monoids. 

The idea is not new. Apart from the theory developed in \cite{WW, ONAG} for games under \emph{normal play}, this approach has been suggested independently in at least three different contexts:
\begin{itemize}
\item In \cite{Propp} Jim Propp studied three-player impartial games. 
\item Thane Plambeck \cite{P05, P} and Plambeck and Aaron Siegel \cite{PS08} have obtained a beautiful theory of mis\`ere impartial games, in some cases solving long-standing open problems. See also \cite{Siegel}.
\item In \cite{JW2}, the author analyzed certain two-person card games with a method stemming directly from \cite{ONAG}, although these games fail to be combinatorial games in the strict sense. 
\end{itemize}
A central idea in all three cases was the definition of \emph{equivalence} of game states, shifting the focus from direct attempts at describing optimal play to determining the algebraic structure of the resulting quotient monoid. 

The present paper is an attempt to describe a general methodology embracing these examples and to refute the wide-spread belief that the theory developed in \cite{WW, ONAG} applies only to games under normal play. At the same time we find that several results like the solution to Moore's nim \cite{Moore} and the results on comply-constrain games \cite{GS, HR}, which have been regarded as isolated curiosities, also fit into this framework.

\section{Games}
The well-known theory of combinatorial games as developed in \cite{WW, ONAG} starts from a precise definition of \emph{partizan} games and the subclass of \emph{impartial} games. Here we argue that the ideas behind this theory are applicable to a variety of other games. A problem is that at the current state of knowledge we have no definition of a class of games which isn't either too general and abstract to be of any use, or too narrow to include all our examples. Therefore the following should be taken as an informal description of the sort of game where we expect the algebraic approach to be useful. 

\begin{itemize}
\item Our games usually have \emph{players}. Often there are two of them (and they go by names like Left and Right, Blue and Red, or East and West).

\item The games have \emph{positions}. When playing the game, one of the positions has to be chosen as the starting position, but usually there is nothing special about that position.

\item The game proceeds by a sequence of \emph{moves}. A move is an action taken by a player, and the move transforms one position into another.

\item There are rules that specify an \emph{outcome} of the game. Sometimes one player wins and the other loses. Some games allow for ties or draws. Yet other games have scoring systems that award points to the players. 

\end{itemize}

In the examples in this paper, the players always have complete information about the game and there are no chance moves or simultaneous moves. It is not necessary to make such stipulations, but there are reasons why we shouldn't expect the present approach to lead to new insights about yatzy, prisoner's dilemma, or soccer.

The following two criteria are still informal, but point towards where we expect our approach to be useful.

\begin{itemize}
\item It is easy to define (but not necessarily to compute!) the outcome as a function of the current position. For two-player zero-sum games of perfect information this function is defined by \emph{optimal play}, but this is not the only possibility. In \cite{Propp} a position in a three-player game is classified as \emph{queer} if no player can force a win.

\item Positions split naturally into \emph{components}. There are plenty of examples of such games in \cite{WW}. In \cite{JW2}, card deals are split into single-suit subgames.
\end{itemize}

Our approach thus tends to focus on positions and their outcomes, rather than on the players and how they choose their moves. As we shall see, it is not always obvious from the rules of a game what we should mean by a position and an outcome. In the next three sections, we describe how, by defining these concepts appropriately, we can set up an algebraic framework in which a solution to the game can sometimes be obtained.

\section{Positions} \label{S:position} In a game such as chess or tic-tac-toe, it is natural to study the computational problem of finding, for a given position, its outcome under optimal play. To evaluate a position, we have to know not just how the pieces are located on the board, but also which of the two players is about to move. It is natural to think of each position as carrying with it a flag that shows whose turn it is. 

One of the great insights that led to the development of combinatorial game theory \cite{WW, ONAG} was that by separating the position from this flag, we can regard a ``sum'' of several games as a game in its own right. A player makes a move in the compound game by making a move in one of the components, leaving the rest of them unchanged. This requires each position to have a specified set of move options for each player, since the move order is not necessarily alternating within each component.

The games we study all have in common that their positions split naturally into components in one way or another. In each game we therefore regard the set of positions as an abelian monoid. For this to be possible we have to gather the ``global'' information about the game state (like which one of the players is about to move) into a \emph{flag}. To actually play the game, we would have to specify a position and a flag as the starting point. As we shall see, there may be other sorts of flags than the move-flag.

\section{The outcome of a game} \label{S:outcome} 
Our goal when analyzing a particular game is to predict its outcome from the starting position, assuming optimal play. However, we have to refine our concept of outcome. We say that a realization (possible line of play) of a game leads to a \emph{result}. A player wins, the game is drawn, or points are awarded etc. The way we have defined the concept of position, the result under optimal play will depend on the flag. Therefore when we speak of the \emph{outcome} of a position, we shall mean a function that associates a result to each value of the flag. 

In the classical theory, there are two values of the flag, ``Left to move'' and ``Right to move'', and there are two possible results of a game, ``Left wins'', and ``Right wins''. Consequently, positions are classified into four \emph{outcome classes}. These are \emph{Left wins} (no matter who starts), \emph{Right wins}, \emph{First player wins} and \emph{Second player wins}. These classes go by the names Positive, Negative, Fuzzy and Zero games respectively. If we would classify chess positions in the same manner, an outcome would be a function from the flags ``White to move'' and ``Black to move'' to the three potential results of a game, ``White wins'', ``Draw'', and ``Black wins''. There would therefore be $3^2=9$ outcome classes. 

\section{The algebraic framework} Let $A$ be the set of positions in a game. It is not necessarily clear from the rules of a game that there is a unique correct way to define $A$, but we assume that we have set things up so that $A$ is an abelian monoid. We use additive notation, since this is consistent with most of the literature, although Plambeck and Siegel \cite{P05, PS08} use multiplicative notation. We let $U$ be the set of outcomes. For the moment we completely disregard the players and the rules of the game, and just let $\chi:A\to U$ be the function that maps positions to outcomes. 

Perhaps we should say that $\chi$ maps positions to outcomes \emph{under optimal play}. However, this may be ambiguous, and lead to questions about how to define optimal play. In \cite{Propp}, Jim Propp discusses three-person games with a similar setup, mapping positions to outcome classes, without ever defining what is meant by optimal play! See also \cite{Loeb} for more on multi-player combinatorial games.

\subsection{Refinements} We say that a function $f:A\to B$ is a \emph{refinement} of $\chi$ if $\chi$ can be factored through $f$, that is, if there is a function $\phi:B\to U$ such that the diagram
\begin{equation}
\begin{matrix}
A & \overset{f}\longrightarrow & B\\
 & \chi\searrow & \downarrow \phi \\
 &  &  U
\end{matrix}
\end{equation}
commutes. A less fancy way of saying the same thing is that for all $x, y\in A$, we should have $\chi(x)=\chi(y)$ whenever $f(x) = f(y)$. The refinements that we discuss will sometimes be constructed artificially, and sometimes arise as outcome functions of other games (typically by a refinement of the preferences of the players). When no confusion can arise we identify a game with its outcome function. Therefore we can say that one game is a refinement of another.

\subsection{Homomorphisms} Normally there is no additive structure defined on the set $U$, and even if there is (see Section \ref{S:trick-taking}), the outcome function of a game is not in general a homomorphism. In order to use the additive structure of $A$, we make the following definition, which differs slightly from the algebra textbook in that we do not assume an additive structure to be given in advance on the image. A function $f:A\to B$ that maps $A$ to an arbitrary set $B$ is a \emph{homomorphism} if for $x, y\in A$, $f(x+y)$ is determined by $f(x)$ and $f(y)$. If this holds, then it is possible to define addition of elements in the image of $f$ so that for all $x, y\in A$, $f(x+y) = f(x) + f(y)$. Commutativity and associativity carry over from $A$ to the image of $f$, making it an abelian monoid. If $f$ is not surjective, then this means that $f$ can be a homomorphism even if this does not uniquely define addition on all of $B$, but this is not important.

\subsection{Homomorphic refinements} The idea of the algebraic method is that even if the outcome function $\chi$ of a game is not itself a homomorphism, it is sometimes not entirely incompatible with the addition on the set $A$ of positions. To exploit this, we try to construct a mapping $f$ from $A$ to a monoid $B$ which is at the same time a homomorphism and a refinement of the outcome function. If this can be done so that the function $\phi:B \to U$ can be described explicitly, then $B$ can serve as a stepping stone in order to describe the outcome function $A\to U$. In general it is reasonable to assume that the function $\phi$ will be easier to describe the more the elements of $A$ tend to map to the same elements of $B$. The following rather trivial theorem states that there is a natural candidate for the set $B$. However, this refinement is often easier to define than to describe explicitly.

\begin{Thm} There is a homomorphic refinement $\chi'$ of $\chi$ such that every homomorphic refinement of $\chi$ is a refinement of $\chi'$. 
\end{Thm}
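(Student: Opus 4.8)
The plan is to realize $\chi'$ as the quotient of the monoid $A$ by an explicitly defined congruence — the ``indistinguishability'' relation that underlies the quotient monoid construction mentioned in the abstract. Define a relation $\sim$ on $A$ by declaring $x\sim y$ precisely when $\chi(x+z)=\chi(y+z)$ for every $z\in A$. First I would record the easy structural facts: $\sim$ is visibly an equivalence relation, and it is a congruence, since if $x\sim y$ then for any $w,z\in A$ we have $\chi((x+w)+z)=\chi(x+(w+z))=\chi(y+(w+z))=\chi((y+w)+z)$, so $x+w\sim y+w$. Let $\chi'$ be the quotient map $A\to A/{\sim}$. The congruence property makes $\chi'$ a homomorphism in the sense of the paper, because the class of $x+y$ depends only on the classes of $x$ and of $y$; and taking $z=0$ in the definition of $\sim$ shows that $\chi(x)=\chi(y)$ whenever $\chi'(x)=\chi'(y)$, so $\chi'$ is a refinement of $\chi$, the factoring map $\phi:A/{\sim}\to U$ sending the class of $x$ to $\chi(x)$.

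It then remains to verify the universal property. Let $f:A\to B$ be any homomorphic refinement of $\chi$, and suppose $f(x)=f(y)$. For each $z\in A$, since $f$ is a homomorphism $f(x+z)$ is determined by $f(x)$ and $f(z)$, and likewise $f(y+z)$ is determined by $f(y)$ and $f(z)$; as $f(x)=f(y)$ this forces $f(x+z)=f(y+z)$. Because $f$ refines $\chi$, we conclude $\chi(x+z)=\chi(y+z)$, and since $z$ was arbitrary, $x\sim y$, i.e.\ $\chi'(x)=\chi'(y)$. Thus $\chi'$ factors through $f$, which is exactly the assertion that $f$ is a refinement of $\chi'$.

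I do not expect a genuine obstacle: the statement is essentially a formal universal-property argument, as the author's phrasing (``rather trivial'') already signals. The only points needing care are the legitimate use of the monoid identity $0$ when extracting $\chi(x)=\chi(y)$ from $x\sim y$, and checking that the paper's slightly nonstandard notion of homomorphism — that $f(x+y)$ be a function of $f(x)$ and $f(y)$ — simultaneously endows $A/{\sim}$ with a well-defined abelian monoid structure (commutativity and associativity being inherited from $A$) and delivers the implication $f(x)=f(y)\Rightarrow f(x+z)=f(y+z)$ used in the universality step. Should one wish to avoid invoking the identity at all, one can instead define $x\sim y$ to mean $\chi(x)=\chi(y)$ together with $\chi(x+z)=\chi(y+z)$ for all $z\in A$, and the entire argument goes through unchanged.
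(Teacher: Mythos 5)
Your proof is correct and is essentially the paper's own argument: the paper defines the same equivalence relation ($x$ and $y$ are equivalent when $\chi(x+z)=\chi(y+z)$ for all $z$), takes $\chi'$ to be the natural map onto equivalence classes, and verifies the same two directions you do. Your additional care about the congruence property and the use of the identity $z=0$ simply spells out steps the paper leaves implicit.
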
 

\begin{proof} We say that two positions $x, y \in A$ are \emph{equivalent} (with respect to $\chi$) if for every $z \in A$, $\chi(x+z) = \chi(y+z)$. If $f$ is a homomorphic refinement of $\chi$, and $x$ and $y$ are elements of $A$ such that $f(x)=f(y)$, then $x$ and $y$ have to be equivalent. Conversely, the natural mapping of elements of $A$ to equivalence classes is a homomorphic refinement of $\chi$.
\end{proof}

We let $A/\chi$ denote the quotient of $A$ with respect to equivalence. The strategy for solving combinatorial games that forms the theme of this paper is as follows:

\begin{enumerate}
\item Define \emph{positions}, how to \emph{add} them, and their \emph{outcomes} appropriately.

\item Find an explicit description of a homomorphic refinement $A\to B$, for instance the quotient map $A\to A/\chi$, and how $B$ maps to the set $U$ of outcomes. If necessary use guesswork and whatever ad hoc methods available.

\item Prove the correctness of the results stated in (2) by induction.
\end{enumerate} 

\section{Several examples} In this section, we demonstrate several examples of the algebraic approach. Some of these games are well-known, although their solutions have not been formulated in terms of monoid homomorphisms. In those cases we skip step (3).

\subsection{Nim} Nim is a two-person impartial game played with piles of counters. A move consists in removing any number of counters from a single pile. Players take turns moving, and a player unable to make a move loses. 

It is natural to represent a position as a formal sum of the piles. For example, a position with three piles of sizes 3, 4 and 6 could be written as $$ \nim{3} + \nim{4} + \nim{6}.$$ The symbols $\nim{1}, \nim{2}, \nim{3},\dots$ are regarded as formal variables, generating a free abelian monoid $H$ that we call the \emph{heap monoid}. 

There are two players and we can call them Left and Right, but since nim is an impartial game, if Left wins when Left starts, then by symmetry Right wins when Right starts, and vice versa. Therefore we only need to classify positions into $\mathcal{N}$-positions (Next player wins) and $\mathcal{P}$-positions (Previous player wins). But notice the difference between the \emph{results} (Left or Right wins) and the set of \emph{outcomes} $U = \{\mathcal{N}, \mathcal{P}\}$.

We identify nim with its outcome function $$\mathrm{nim}:H\to\{\mathcal{N}, \mathcal{P}\}.$$
According to the well-known solution found by C.~Bouton in 1902 \cite{Bouton}, when we pass to the quotient $H/\mathrm{nim}$, every position becomes equivalent to a single-pile position, which is found by so-called nim-addition (sometimes described as ``binary addition without carry'', or in computer language, ``XOR''). The quotient is therefore isomorphic to a direct sum of $\mathbf{Z}_2$'s, which we call the \emph{nim group}. $$H/\mathrm{nim} \cong \bigoplus_{i=0}^\infty{\mathbf{Z}_2}.$$

It has often been said that all impartial games are equivalent to nim. Implicit in this statement is not just the normal playing convention (last move wins), but also the assumption that games are added according to the classical theory, meaning that the move-order is alternating and a move is made by moving in exactly one of the components. As we shall see there are several impartial games which are, with respect to the natural additive structure, not isomorphic to nim. 
 
\subsection{Mis\`ere nim} The only difference between mis\`ere nim and ordinary nim is that in the mis\`ere form, the player who makes the last move \emph{loses}. The solution is well-known, but is usually presented as an ad hoc result which is not part of the general theory. Once the normal form of nim is solved, a solution to the mis\`ere form is obtained by observing that when only piles of size 1 remain, the game is decided by the parity of the number of piles. Therefore, whether the game is played by the normal or the mis\`ere playing convention, the player who leaves a single pile of size at least two, together with any number of piles of size 1, will lose. The other player removes the pile of size at least two, or leaves a single counter, according to parity and playing convention. 

Mis\`ere nim is played on the same set of positions as ordinary nim, and it is still an impartial game, so mis\`ere nim too is a function $H \to \{\mathcal{N}, \mathcal{P}\}$. To find the outcome of a position, we need to know two things, (i) the nim-sum of the pile sizes, and (ii) whether or not there is a pile of size at least 2 (a ``large'' pile). Mapping each position to the pieces of information (i) and (ii) is actually a homomorphism, since if we know (i) and (ii) for two positions $x$ and $y$, then we know (i) and (ii) for the sum $x+y$. Hence we can obtain an explicit homomorphic refinement of the mis\`ere nim function by mapping $H$ to the cartesian product of (i) the nim group, and (ii) a two-element monoid consisting of the element ``no large pile'' (which acts as zero element) and ``at least one large pile'', with the addition rule ``at least one large pile'' + ``at least one large pile'' = ``at least one large pile''. This monoid, which we denote by $B_1$, the boolean lattice of rank 1, is responsible for the fact that the quotient of $H$ with respect to mis\`ere nim is not a group. The quotient is actually isomorphic to the submonoid of $H/\mathrm{nim}\times B_1$ which is given by the observation that if the nim sum of a number of piles is at least 2, then there has to be a ``large'' pile. We can denote the elements of the quotient by $0, 1, \hat{0}, \hat{1}, \hat{2},\dots$, where a hat means that there is a large pile. These elements are added by adding the numbers by nim-addition, and putting a hat on the sum if at least one of the terms has a hat. They are mapped to outcomes by mapping $1$ and $\hat{0}$ to $\mathcal{P}$, and the remaining elements to $\mathcal{N}$.  

\subsection{Moore's nim}
Another variation of nim which is mentioned in several places in the literature is Moore's nim \cite{Moore}. This game too is played with piles of counters, so it is natural to again impose the additive structure of the heap monoid $H$. A move consists in removing any number of counters from at most $k-1$ piles, where $k\geq 2$ is a fixed integer. The player unable to make a move loses. The case $k=2$ is ordinary nim. In the standard formulation, a player has to remove counters from at least one and at most $k-1$ piles. In another form of the game, one is allowed to add counters to some of the piles, as long as the total number of counters decreases, and at most $k-1$ piles are affected. The two games are equivalent in the sense that the $\mathcal{P}$-positions are exactly the same. We let $$\mathrm{Moore}_k:H\to \{\mathcal{N}, \mathcal{P}\}$$ denote the outcome function. This function is computed in a rather peculiar way. First, the number of counters in the piles are written in binary. Then these numbers are added ``modulo $k$ without carry''. If this sum is zero, the position is a $\mathcal{P}$-position, otherwise it is an $\mathcal{N}$-position. 

Again we see that the description of the set of $\mathcal{P}$-positions goes through a monoid homomorphism. The quotient $H/\mathrm{Moore}_k$ is isomorphic to a direct sum of an infinite number of copies of $\mathbf{Z}_k$. 

From our point of view this shows that the algebraic method can work even in cases where a move can change several components. Already if $k = 3$, it is impossible to split the game into two components if it is required that each move should affect only one of the components.  

In principle Moore's nim can be regarded as a classical impartial game, but in the classical setting it does not decompose into components. See also the paper \cite{ES} by Richard Ehrenborg and Einar Steingr\`{\i}msson for a generalization of Moore's nim. 

\subsection{Nim with a comply-constrain twist}
In \cite{SS}, Furman Smith and Pantelimon St$\breve{\text{a}}$nic$\breve{\text{a}}$ introduced a class of games called comply-constrain games. These are games where a player, after making a move, puts a constraint on the opponent's next move. This sort of twist to a game is also called a \emph{Muller twist} after Muller, the inventor of the game \emph{Quarto}. The game of Odd-or-Even-nim, solved in \cite{SS}, is generalized two higher moduli by Hillevi Gavel and Pontus Strimling in \cite{GS}. 

For simplicity, we discuss a variant of comply-constrain nim without referring to modular arithmetic. It follows from the analysis that this game is essentially equivalent to the game called \emph{$k$-blocking modular nim} in \cite{GS}. See also \cite{AN2} for other nim-like games with constraints.

This game too is played on the heap monoid $H$. Before each move, there is a \emph{constraint-flag}, placing a restriction on the available moves. The constraint-flag is a set of $k-1$ positive integers which are forbidden move-sizes. The player to move has to remove a positive number of counters from a single pile, but this number cannot belong to the constraint set. After making a move, the player puts a new constraint by naming a new set of $k-1$ positive integers. A player unable to move loses, and this will clearly happen when all pile sizes are smaller than $k$. The player who first obtains such a position will win by putting the constraint $\{1,\dots,k-1\}$, making it impossible for the opponent to make another move. 

The presence of the constraint-flag makes the classification of positions into outcome classes a little more complicated. An outcome should specify what the result under optimal play should be (Previous or Next player win), for every possible constraint. Since there is, in principle, an infinite number of different constraints, there is potentially an uncountable infinity of different outcome classes. However, it turns out that only finitely many of these actually occur.  

The solution of the game, which is worked out in detail in \cite{GS}, can be described as follows: For each pile, the size is written as $q_ik + r_i$, where $q_i$ and $r_i$ are the quotient and remainder with division of the pile-size by $k$. The information we need in order to determine the outcome of a position is (i) the nim-sum of the $q_i$'s, and (ii) the maximum of the $r_i$'s. A position is a Previous-player win if (i) the nim sum of the $q_i$'s is zero, and (ii) the constraint forbids any number up to and including the maximum of the $r_i$'s. 

Again the information (i) and (ii) is ``homomorphic'' in the sense that if we know it for two positions, we know it for their sum. The quotient of $H$ with respect to this game is isomorphic to the cartesian product of the nim group with a linear order of size $k$. A linear order is a lattice, and in general, when we speak of a lattice as a monoid, we refer to the fact that a lattice is an abelian monoid under the operation of taking least upper bound. 

A fact that emerges from the solution in \cite{GS} is that whatever the position is, an optimal constraint is to forbid the numbers $\{1,\dots,k-1\}$.    

\subsection{Mis\`ere impartial games}
We have already mentioned the well-known solution of mis\`ere nim. In 1992, Mis\`ere Kayles was solved by W. Sibert and J. Conway \cite{SC}. In the more recent paper \cite{P05}, T. Plambeck developed a general theory for mis\`ere impartial games. It was shown that the quotient monoid of mis\`ere Kayles is finite, containing 48 elements. The theory was developed with the mis\`ere octal game 0.123 as an example. This game is completely solved by an explicit description of its 20-element monoid structure. In \cite{PS08, P}, several other mis\`ere games are solved, with more or less explicit descriptions of their monoid structures. 

We do not describe this theory in more detail here, but encourage the reader to take a look at the lecture notes \cite{Siegel} and its references.

\subsection{Hackenbush with a natural refinement} The game of hackenbush was introduced in \cite{WW}. It is a two-person \emph{partizan} game, i. e. the move options from a position are different for the two players. The special case that we consider here is called \emph{hackenbush strings} in the literature, but for brevity we refer to it as just hackenbush. The game is played with counters stacked in piles. The counters come in two colors, red and blue. The two players Red and Blue move alternately, and a move consists in removing a counter of the player's own color, together with all counters that are stacked above it. A player unable to move loses.

As with the games of nim-type, we split a position into components by regarding each pile as an irreducible component. We let $H_2$ denote the \emph{two-colored heap monoid}, that is, the free abelian monoid over the set of piles of red and blue counters. It follows from the analysis in \cite{WW} that the quotient $H_2/\text{hackenbush}$ is isomorphic to the set of rational numbers with a power of 2 in the denominator. This set will be denoted $\mathbf{Z}_{(2)}$ since algebraically it is the localization of the integers to the prime ideal $(2)$. 

In the terminology of \cite{WW, ONAG}, hackenbush positions are \emph{Numbers}. Without going deeper into the theory of Numbers, we describe how to calculate the outcome of a position in hackenbush by the so-called \emph{Berlekamp's rule}. We associate with each counter a \emph{weight}. If a counter has no counter of the opposite color anywhere below it, it has weight 1. Otherwise, it has half of the weight of the counter immediately below it. In other words, the counters below the first change of color will all have weight 1, while the counters above the first color change will have weights $1/2, 1/4, 1/8$, etc. Traditionally our sympathies are with Blue, so we now sum the weights of all the counters, with a minus-sign for the red ones. If the sum is positive, Blue wins, if it is negative, Red wins, while if it is zero, the player not to move wins.

Berlekamp's rule defines a monoid homomorphism of $H_2$ to $\mathbf{Z}_{(2)}$. Interestingly, each number of this form has a unique representation as a single-pile position. In perfect analogy with nim, each position is equivalent to a uniquely determined single-pile position. 

The proof of the correctness of Berlekamp's rule is fairly straightforward and carries over to some quite natural refinements of the game. In the following, we refer to the number associated to a position by Berlekamp's rule as its \emph{value}. The following theorem summarizes the necessary ingredients of the analysis of the game.

\begin{Thm} \label{T:hackenbush} If a pile is completely blue, then its value is an integer, Red has no move option, but Blue can decrease the value of the pile by 1. Conversely, if the pile is completely red, then Blue has no move option, but Red can increase the value by 1. If a pile is bichromatic, then its value can be written $a/2^k$, where $a$ is an odd integer, and $k\geq 1$. Blue has an option that decreases the value by $1/2^k$, and Red has an option that increases the value by $1/2^k$.
\end{Thm}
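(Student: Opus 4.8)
The plan is to argue directly from Berlekamp's rule, splitting into the monochromatic and bichromatic cases and using the symmetry that swapping the two colours reverses the sign of every counter but changes neither the pattern of colour-changes nor, therefore, any weight. This swap negates the value of every pile and turns Blue's moves into Red's and vice versa, so it will be enough to prove the assertion about Blue's option for every bichromatic pile; the assertion about Red's option then follows by applying it to the colour-swapped pile, and likewise the completely red case follows from the completely blue one.

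First I would dispose of the monochromatic case, which is immediate: if a pile consists of $n\ge1$ blue counters, no counter has a counter of the opposite colour below it, so every weight is $1$ and the value is the integer $n$; Red owns no counter and has no move, while Blue can remove the top counter, leaving a completely blue pile of height $n-1$ and value $n-1$.

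For a bichromatic pile I would number the counters $1,\dots,N$ from the bottom, let the lowest colour-change fall between heights $n$ and $n+1$, and set $m=N-n\ge1$, so that the bottom $n$ counters have weight $1$ and the counter at height $n+j$ has weight $2^{-j}$ for $1\le j\le m$. With $\varepsilon_0$ the common sign of the bottom run and $\varepsilon_j=\pm1$ the sign of the counter at height $n+j$, the value is $v=\varepsilon_0 n+\sum_{j=1}^{m}\varepsilon_j 2^{-j}$; multiplying by $2^m$ makes every term even except the last, which is $\varepsilon_m=\pm1$, so $2^m v$ is odd and $v=a/2^k$ with $a$ odd and $k=m\ge1$. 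For Blue's move I would take $c^{*}$ to be the highest blue counter (it exists since the pile is bichromatic), say at height $h\ge n$ with weight $w$ ($w=1$ when $h=n$, and $w=2^{-(h-n)}$ when $h>n$), and have Blue remove $c^{*}$ together with everything above it. Every other removed counter is red, by maximality of $c^{*}$, and lies above height $h\ge n$; a short geometric sum shows these red counters have total weight $w-2^{-m}$. Since a counter's weight depends only on the colours below it, truncating the pile from above does not change the weights of the surviving counters, so the value decreases by exactly the total signed weight of the removed counters, which is $w-(w-2^{-m})=2^{-m}=2^{-k}$.

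The only step that needs genuine care is this last one: one must verify that, whatever the colour of the bottom run and wherever the highest blue counter sits, removing it deletes a single blue counter of weight $w$ resting on a (possibly empty) block of red counters of total weight $w-2^{-m}$, and that the truncation leaves the other weights untouched. Both points are easy once one observes that weights look only downward and that $h\ge n$, so I do not expect a real obstacle; the bulk of the argument is the bookkeeping just described.
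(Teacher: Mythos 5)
Your proof is correct, and it takes exactly the route the paper intends: the paper states the theorem without proof, remarking only that the relevant options consist in removing the topmost counter of one's own colour, and your argument verifies precisely that move's effect via Berlekamp's weights (with the colour-swap symmetry handling Red's side), including the correct identification $k=m$ with $m$ the number of counters above the first colour change. No gaps; the geometric-sum bookkeeping and the observation that weights depend only on the counters below are all that is needed.
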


In each case, the move options in question consist in removing the topmost counter of one's own color. By Theorem \ref{T:hackenbush}, it follows that (i) if the value of a position is nonnegative, then every move by Red will make the value of the new position positive, and (ii) if the value of a position is positive, then Blue has a move option that makes the value of the new position nonnegative. The characterization of positive, negative and zero positions follows.

The value of a position in hackenbush somehow measures to what extent it favors one of the players, and the unit of measurement is moves. A monochromatic single pile of $n$ counters is worth $n$ moves for the player of that color. Therefore the following variant of the game is quite natural.

In \emph{integral hackenbush}, the positions and move options are the same as in ordinary hackenbush, but the game stops as soon as it reaches a monochromatic position. When this occurs, a player who has counters left receives one dollar for each of them from the other player. It is also possible that the game terminates at the empty position, in case no payment is made. This game differs from all the two-person games mentioned so far in that there are more than two potential results of the game. We think of the result as an integer describing the payment from Blue's point of view. An outcome is a pair of integers $(m,n)$, where $m$ is the result with Blue to move and $n$ is the result with Red to move.

\begin{Thm} The outcome of integral hackenbush is obtained by rounding the value of the position to the nearest integer. If the value is half an odd integer, then the tiebreak rule is that the value is rounded in favor of the player who is not to move.
\end{Thm}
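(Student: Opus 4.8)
\emph{Plan of proof.} Write $v(P)$ for the value of a position $P$ computed by Berlekamp's rule, and let $b(P),r(P)$ be the results under optimal play with Blue to move and with Red to move respectively (integers: the payment from Blue's point of view), so the outcome is the pair $(b(P),r(P))$. The content of the theorem is the pair of formulas
\[
 b(P) = \left\lceil v(P) - \tfrac12 \right\rceil, \qquad r(P) = \left\lfloor v(P) + \tfrac12 \right\rfloor ,
\]
since $\lceil x-\tfrac12\rceil$ and $\lfloor x+\tfrac12\rfloor$ both equal the integer nearest $x$, except when $x$ is half an odd integer, in which case the former rounds down and the latter up, exactly as prescribed. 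I would prove the two formulas together by induction on the number of counters in $P$. The base case is a terminal position, i.e.\ one with no bichromatic pile; there Berlekamp's rule makes $v(P)$ equal to the number of blue counters minus the number of red counters, which is precisely the payment regardless of the flag, and as this is an integer both formulas give $v(P)$.

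For the inductive step assume $P$ has a bichromatic pile; then Red has a move and $r(P) = \min_{P'} b(P')$ over the positions $P'$ reachable by Red, where $b(P') = \lceil v(P') - \tfrac12\rceil$ by the inductive hypothesis (or by the base case, when $P'$ is terminal). Put $N = \lfloor v(P)+\tfrac12\rfloor$. The lower bound $r(P)\ge N$ is monotonicity: a Right option of a game never dominates it, and since hackenbush positions are Numbers, hence totally ordered, every move of Red strictly increases the value, so $v(P')>v(P)\ge N-\tfrac12$ and therefore $\lceil v(P')-\tfrac12\rceil\ge N$ for every $P'$.

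It remains to exhibit one Red move with $v(P')\le N+\tfrac12$, for then $b(P')\le N$ and so $r(P)\le N$; this is where Theorem~\ref{T:hackenbush} and the dyadic structure of the pile‑values enter, and it is the step I expect to be the main obstacle. Among the bichromatic piles choose one whose value $a/2^{\ell}$ has $\ell$ largest ($a$ odd, $\ell\ge1$). A $2$‑adic valuation argument shows $\ell$ is at least the exponent in the reduced denominator of $v(P)$: the pile‑values sum to $v(P)$, the monochromatic piles contribute integers and each bichromatic pile an odd multiple of $2^{-\ell_i}$, so if every $\ell_i$ were smaller the fractional part of $v(P)$ could not have denominator $2^{\ell}$. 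By Theorem~\ref{T:hackenbush} Red has a move in that pile raising its value, hence $v(P)$, by exactly $1/2^{\ell}$; a short case distinction on the fractional part of $v(P)$ (zero, in $(0,\tfrac12)$, or in $[\tfrac12,1)$) then checks $v(P)+1/2^{\ell}\le N+\tfrac12$ in each case. This gives $r(P)=N$, and $b(P)=\lceil v(P)-\tfrac12\rceil$ follows by the left--right symmetry of the game, which interchanges the colours and negates both $v$ and the results, together with $\lceil x\rceil=-\lfloor -x\rfloor$.

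The one genuinely delicate point is this last upper bound: Red must ``round down'' without overshooting, and the tightest case is when the fractional part of $v(P)$ is a dyadic rational with large denominator lying just below $\tfrac12$, so the allowed increase $1/2^{\ell}$ is forced to be as small as the reduced denominator of $v(P)$ permits; the valuation argument is exactly what guarantees that a pile of that precision is present for Red to move in. Everything else is routine manipulation of floors and ceilings and the symmetric treatment of Blue.
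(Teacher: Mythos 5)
Your argument is correct and follows exactly the route the paper intends: the paper omits the details, remarking only that the theorem is a simple consequence of Theorem \ref{T:hackenbush}, and your induction---the lower bound from monotonicity of the value under Red's moves, the upper bound from a move in the bichromatic pile whose value has the largest denominator exponent (supplied by Theorem \ref{T:hackenbush} together with the $2$-adic bookkeeping), and Blue handled by colour symmetry---is precisely such a derivation, with the case analysis on the fractional part checking out. The only blemish is the misprint where you say the fractional part ``could not have denominator $2^{\ell}$'' (you mean the exponent of the reduced denominator of $v(P)$); the substance is right.
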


This too is a simple consequence of Theorem \ref{T:hackenbush} and we omit the details of its proof. Notice that neither of the two games of hackenbush and integral hackenbush is a refinement of the other, but they still give rise to the same additive structure when passing to the quotient. 

We can construct a simultaneous refinement of the two games. Suppose that we play integral hackenbush strings, but that in addition to trying to remain with as many counters as possible in the end, the players also have a slight preference for making the last move: Beside the payment for remaining counters in the terminal position, a bonus of a quarter is to be paid to the player who makes the last move (that is, the last move before the position becomes monochromatic). We call this game \emph{refined integral hackenbush}. Optimal play in this game must be optimal for both ordinary and integral hackenbush strings. The following is another consequence of Theorem \ref{T:hackenbush}. 

\begin{Thm}
The outcome of refined integral hackenbush is obtained by rounding the value of the position to the nearest rational number with a smallest denominator of 4.
\end{Thm}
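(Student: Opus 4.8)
The plan is to run an induction on the number of counters, with Theorem~\ref{T:hackenbush} doing the real work, exactly as for integral hackenbush; the one genuinely new feature is that the quarter-dollar bonus has to be carried along the recursion. Write $v=\mathrm{val}(P)$, and let $\rho(v)$ be the rational of reduced denominator $4$ nearest to $v$, a tie --- which occurs precisely when $v\in\tfrac12\mathbf Z$ --- being broken in favour of the player not to move; so $\rho$ is really a pair of nondecreasing functions $\rho_{\mathrm B}\le\rho_{\mathrm R}$ with $\rho_{\mathrm R}(v)-\rho_{\mathrm B}(v)\in\{0,\tfrac12\}$. Since Berlekamp's rule is a monoid homomorphism $H_2\to\mathbf Z_{(2)}$, the value of a position is the sum of the values of its piles and a move affects a single pile, so the values reachable by one Blue move are $v$ minus the decrements that Theorem~\ref{T:hackenbush} allows in the individual piles; note in particular that a move into a monochromatic position lands on an \emph{integer} value, and that (because these positions are Numbers) every Blue move lowers the value by at least $\delta=2^{-K}$, where $2^{-K}$ is the smallest of the numbers $2^{-k_i}$ attached by Theorem~\ref{T:hackenbush} to the bichromatic piles, this minimum being attained by chopping the topmost blue counter of such a pile. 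I would prove that refined integral hackenbush from $P$ has result $\rho_{\mathrm B}(v)$ with Blue to move and $\rho_{\mathrm R}(v)$ with Red to move.

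The base case is a monochromatic $P$: no move is available and the result is $v=b-r$, which agrees with $\rho$ once one reads the rules so that a monochromatic starting position counts as having just been entered by the opponent. For the step let $P$ be non-monochromatic and take Blue to move, the Red case being the mirror image. That Blue can guarantee at least $\rho_{\mathrm B}(v)$: let Blue make the minimal-step move, reaching $P'$ with $\mathrm{val}(P')=v-\delta$; if $P'$ is monochromatic then $v-\delta$ is the nearest integer below $v$, Blue has made the last move, and $(v-\delta)+\tfrac14\ge\rho_{\mathrm B}(v)$ by a short dyadic estimate; if $P'$ is bichromatic then by induction the result is $\rho_{\mathrm R}(v-\delta)$, and one checks that $\delta$ is never large enough to carry $v$ strictly below the midpoint just left of $\rho_{\mathrm B}(v)$, so monotonicity of $\rho_{\mathrm R}$ gives $\rho_{\mathrm R}(v-\delta)\ge\rho_{\mathrm B}(v)$. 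Dually, Red can hold Blue to at most $\rho_{\mathrm B}(v)$: every Blue move lowers the value by at least $\delta$, so moves into bichromatic positions are bounded by the inductive hypothesis and monotonicity of $\rho_{\mathrm R}$, while a move into a monochromatic position reaches an integer value at most the nearest integer below $v$, and adding $\tfrac14$ still does not exceed $\rho_{\mathrm B}(v)$. Together with the analogous bounds for Red to move, and the routine separate check of the tie points $v\in\tfrac12\mathbf Z$, this closes the induction and gives the outcome $(\rho_{\mathrm B}(v),\rho_{\mathrm R}(v))$.

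The point where I expect the real care to be needed is this bonus bookkeeping near the end of the game. A priori a player might prefer a move that sacrifices more value but collects the quarter, since individual value-steps can be smaller than $\tfrac14$; what makes the theorem true is that a move into a monochromatic position necessarily lands on an integer value, so the value given up is exactly compensated by the bonus, and the net effect is always the $\pm\tfrac14$ that separates $\rho$ from the nearest-integer rounding governing integral hackenbush. Pinning down that the player who can force making the monochromatic-izing move is controlled by the same sign-and-midpoint data about $v$ as $\rho$ --- and confirming this against the tie cases --- is where the content lies; the remaining inequalities are elementary arithmetic with dyadic rationals.
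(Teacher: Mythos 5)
The paper itself omits the proof --- the theorem is stated as ``another consequence of Theorem~\ref{T:hackenbush}'' --- and your induction on the number of counters, driven by the exact move decrements of Theorem~\ref{T:hackenbush} plus the observation that a terminal position has integer value equal to the payment, is exactly the intended route. I checked your two ``short dyadic estimates'' and they do hold: $\rho_{\mathrm R}(v-\delta)\ge\rho_{\mathrm B}(v)$ because $2v$ is a multiple of $2^{-(K-1)}=2\delta$, so $2v-2\delta$ cannot drop below $\lceil 2v\rceil-1$; and the terminal-move bounds are immediate since $\delta\le\tfrac12$.

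Two points need repair before the argument is complete. First, your inductive case split presupposes that every non-terminal position contains a bichromatic pile: $\delta$ is a minimum over bichromatic piles, and your designated Blue move is made in ``such a pile.'' This is legitimate only if the stopping rule is read as ``the game ends as soon as \emph{every pile} is monochromatic.'' Under the literal reading ``all remaining counters have one colour'' there are live positions with no bichromatic pile, e.g.\ one blue counter plus one red counter; there $\delta$ is undefined, your strategy does not exist, and in fact the statement itself fails (value $0$, yet Blue to move is forced to the result $-1+\tfrac14=-\tfrac34$ rather than $-\tfrac14$). So you must state and use that convention explicitly --- it also absorbs your base-case convention about who holds the bonus at a terminal start. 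Second, in the upper bound the inequality you need is $\rho_{\mathrm R}(v'')\le\rho_{\mathrm B}(v)$ for \emph{every} value $v''<v$ reachable by a Blue move; monotonicity of $\rho_{\mathrm R}$ together with $v''\le v-\delta$ only compares against $\rho_{\mathrm R}(v-\delta)$, for which you proved the \emph{opposite} inequality $\rho_{\mathrm R}(v-\delta)\ge\rho_{\mathrm B}(v)$. What closes this side is the one-line fact that $\rho_{\mathrm R}(x)\le\rho_{\mathrm B}(y)$ whenever $x<y$ (since $\lfloor 2x\rfloor\le\lceil 2y\rceil-1$), so only strict decrease of the value under Blue's moves is needed there, not the quantitative bound $\delta$. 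With these two repairs, and the symmetric argument for Red, your proof is sound and fills in exactly what the paper leaves to the reader.
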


In the theory of \emph{thermography} \cite{ONAG}, one considers playing a game under the condition that the game terminates as soon as one reaches a position which is a Number. This number is then regarded as the result of the game. Similarly, one could play an ``integral'' version of any partizan game by terminating the game as soon as an integer is reached. 

\section{Trick-taking games} \label{S:trick-taking} \emph{Trick-taking games} is a family of card games of which surprisingly little can be found in the literature on combinatorial games. In the early days of contract bridge, the mathematician and former chess world champion Emanuel Lasker investigated a two-player model of the game, involving only one suit \cite{Lasker}. This and related games have subsequently been studied in \cite{KLW1, KLW2, KLW3, JW1, JW2}, but this seems to be an essentially complete bibliography. 

There are two players called \emph{West} and \emph{East}. The game is played with a set (deck) of \emph{cards}. Each card belongs to a \emph{suit}, and the cards within a suit are totally ordered by \emph{rank}. We do not limit the number of suits or the number of cards in a suit, and we do not assume that there are equally many cards in each suit. 

A game position is called a \emph{deal}, and consists of a distribution of cards to the players, that is, each player receives a set of cards called their \emph{hand}, and the East and West hands are disjoint. We say that a deal is \emph{balanced} if East and West have equally many cards. Many real-world trick-taking games require the deal to be balaced. Moreover, a deal is \emph{symmetric} if in each suit, the players have the same number of cards. 

One of the players is said to have the \emph{lead} (or \emph{be on lead}). The player on lead plays a card from their hand. The other player has to \emph{follow suit} if possible, that is, to play a card of the same suit as the card being led, and otherwise discard another card. These two cards constitute a \emph{trick}. The winner of the trick is the player who played the highest card in the suit that was led. The cards in the trick are removed, and the player who won the trick gets the lead. The game proceeds until all the cards have been played.

\subsection{Splitting into single-suit deals} We impose a monoid structure on the set of deals by considering each deal as a formal sum of its single-suit components. This is possible since all the suits have the same status. For example, we need not distinguish between the deals

\begin{equation}
\begin{array}{l l}
West:  & East:\\
\spadesuit \,\text{10 8 5} \qquad & \spadesuit \,\text{ J 9}\\
\heartsuit \,\text{ K } \qquad & \heartsuit \, \text{ Q 8}\\
\end{array}
\end{equation}

and 

\begin{equation}
\begin{array}{l l}
West:  & East:\\
\heartsuit \,\text{ 10 8 5} \qquad & \heartsuit \, \text{ J 9}\\
\diamondsuit \,\text{ K} \qquad & \diamondsuit \,\text{ Q 8}\\
\end{array}
\end{equation}

We think of both these deals as the sum $\left[10, 8, 5| \mathrm{J}, 9\right] + \left[\mathrm{K}| \mathrm{Q}, 8\right]$. As long as the rules of the games do not distinguish between the suits, these deals will have the same outcome. We let $\mathbf{D}$ be the monoid of all deals. The classes of balanced and symmetric deals are submonoids of $\mathbf{D}$, but unfortunately the set of balanced deals is not closed under splitting into single-suit components. Since most trick-taking games require the deals to be balanced, it may be necessary to restrict our attention to the symmetric case.

\subsection{Five-card} \emph{Five-card} is a traditional Swedish card game in which the objective is to win the last trick. We consider the two-player perfect information form of this game. The single-suit case was solved in \cite{JW1}. In principle, this solution carries over to the multi-suit case, although multi-suit games were not considered in that paper. Five-card is played with the so called \emph{greedy rule}, which means that playing second in a trick, one has to win the trick if possible.

It turns out to be sufficient to consider symmetric positions, but this is not obvious from the rules of the game, so to begin with, we just assume that the deal is symmetric. We define the \emph{trace} of a deal as follows. In each suit, compare the highest card on West's hand with the highest card on East's hand, then the second card on West's hand with the second card on East's hand, etc. For each comparison, count $+1$ if West's card is higher, and $-1$ if East's card is higher. The sum is the trace of the deal. It was shown in \cite{JW1} that in the single-suit case, West wins if the trace is positive, East wins if the trace is negative, and the player on lead wins if the trace is zero. 

The reason for this is that (1) If the player on lead wins all the comparisons in the computation of the trace, then they can maintain this by leading the smallest card in some suit. (2) Otherwise the player on lead can lead a card that loses its comparison in the computation of the trace, and make sure that the trace increases. (3) The player not on lead can play so that either the trace increases (if the opponent leads a high card) or they obtain the lead and the trace decreases by 1. From these three statements, the solution follows by induction.

Once the solution of the symmetric case is established, it is relatively easy to generalize it to general balanced deals. The \emph{symmetrization} of a deal is obtained by deleting cards with low rank in the suits where a player has more cards than the opponent, so that the deal becomes symmetric.

\begin{Thm}
The outcome of five-card under optimal play is the same as the outcome of the symmetrization of the deal. 
\end{Thm}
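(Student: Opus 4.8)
The plan is to deduce the statement from the already-established symmetric case. Write $D$ for the given balanced deal and $D'$ for its symmetrization; since the trace of $D'$ together with the identity of the player on lead determines the outcome of $D'$ (West wins if the trace is positive, East if it is negative, and the player on lead if it is zero), it suffices to show that $D$ and $D'$ have the same outcome under each lead. Because the deal is balanced, the cards deleted in forming $D'$ come in equal numbers from the two hands: there are suits in which West has a \emph{surplus} (more cards than East) and suits in which East has a surplus, and the number of cards West holds in excess of East, summed over the West-heavy suits, equals the corresponding count for East. Hence $D'$ is reached from $D$ by repeatedly deleting a \emph{surplus pair} --- a lowest-ranked card of West in some West-heavy suit together with a lowest-ranked card of East in some East-heavy suit --- and each such step again yields a balanced deal whose symmetrization is still $D'$. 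So it is enough to prove that deleting one surplus pair does not change the outcome.

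For this I would use strategy stealing. Let $\tilde D$ be $D$ with one surplus pair removed, and suppose that, for the given lead, player $P$ wins $\tilde D$; I claim $P$ also wins $D$. The idea is that $P$ plays $D$ while mentally running an optimal strategy for $\tilde D$, regarding the two reintroduced cards as low \emph{fillers}. The key observations are: (i) $P$'s own filler is never the unique card with which $P$ can win a trick, since it is $P$'s lowest card in its suit, so whenever it beats the led card so does every other card $P$ holds in that suit; hence the simulated strategy never asks $P$ to play it, and $P$ can keep it in hand, spending it only as a forced low play (a discard, or the bottom card when $P$ must follow a suit without being able to win). (ii) The opponent's filler, when played, lies in a suit where $P$ is short, so $P$ either follows it harmlessly --- possibly winning the trick, which only helps $P$ --- or discards, and in the latter case $P$ discards its own filler, so that the two reintroduced cards leave the game together and the simulated play of $\tilde D$ stays synchronized. (iii) The number of West-fillers equals the number of East-fillers, so this bookkeeping is consistent, and a win of the last trick in the simulated play of $\tilde D$ corresponds to a win of the last trick in $D$.

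The main obstacle is making the simulation precise, which is where the greedy rule bites. One wants an invariant asserting that the current position of $D$ is the simulated position of $\tilde D$ with one extra low card in each hand, the same player on lead, and $P$ still winning the $\tilde D$-position, and one must check this survives every trick; the delicate points are when the opponent can follow suit into a filler suit and is thereby forced by the greedy rule to win a trick and seize the lead in a way that had no counterpart in $\tilde D$, and when the opponent leads its filler in a suit where, in the simulated $\tilde D$, it was void and merely discarded. An alternative route that avoids the simulation is to re-run the inductive argument behind statements~(1)--(3) directly for balanced deals, using the \emph{generalized trace} obtained by pairing West's $i$-th highest card with East's $i$-th highest card in each suit only up to the length of the shorter holding; this generalized trace of $D$ equals the trace of $D'$, statements~(2) and~(3) carry over essentially verbatim from the analysis in \cite{JW1}, and~(1) needs only the extra remark that a player who wins every paired comparison still does so after leading the lowest card of a suit in which he has a surplus --- a short check on how the greedy response reshuffles the pairing in that suit.
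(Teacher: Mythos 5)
The paper itself does not write this proof out: it proves the symmetric case via the trace and statements (1)--(3), then asserts that the extension to balanced deals is ``relatively easy,'' recording only the key observation that there is no point in leading a suit where the opponent is void if one has another card. So the real question is whether your argument would actually close that gap, and your primary route does not. The surplus-pair simulation breaks exactly at the point you flag and then leave unresolved. Concretely: suppose the simulating player $P$'s only remaining card in a surplus suit is the filler, and the opponent leads a lower card of that suit. The greedy rule then forces $P$ to win the trick and seize the lead, whereas in the simulated play of $\tilde D$, $P$ is void in that suit, discards, and the opponent keeps the lead. After this trick the two positions differ both in who is on lead and in which extra card $P$ holds (the simulation's discard rather than a lowest surplus card), so the invariant you propose does not survive; and ``winning a trick only helps $P$'' is not a legitimate step in a game whose objective is the \emph{last} trick, where being on lead can be a liability. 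Since you explicitly defer resolving this, the first route is a plan rather than a proof.

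Your alternative route is essentially the argument the paper has in mind, and its starting point is sound: the generalized trace of a balanced deal (pairing top cards only up to the shorter holding in each suit) equals the trace of the symmetrization. But the claim that (2) and (3) ``carry over essentially verbatim'' hides precisely the new cases in which the content of the theorem lies. When a suit is led in which the other player is void, that player cannot follow, cannot win the trick, and the lead does not change; with a harmless discard the generalized trace is also unchanged, so this trick falls outside the trichotomy of (1)--(3) and needs its own clause in the induction. To make it work you must check (a) that in a balanced deal the void player always has an unpaired (surplus) card to throw, which follows from counting unpaired cards on the two sides, so the discard leaves the pairing intact; (b) the degenerate case in which every suit the leader holds is opponent-void, where the generalized trace is zero and the leader simply wins all remaining tricks including the last; and (c) for the leader's strategies in (1) and (2), that the recommended lead can always be taken in a suit where the opponent still has cards --- this is exactly the paper's remark about never leading an opponent-void suit when one has another card. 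These checks are short but they are the proof; asserting that the inductive steps of \cite{JW1} transfer verbatim skips them.
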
 

It turns out that there is no point in leading a card where the opponent is void if one has another card (but clearly this holds only under perfect information and not in the real-world card game). 

\subsection{Symmetric mis\`ere five-card}
Suppose instead that the player who wins the last trick loses. This completely changes the strategy. The single-suit game is trivial, since the player who has the smallest card, the ``deuce'', wins by saving it for the last trick. With several suits, it is no longer true that the general case reduces to the symmetric case. Here we only discuss the symmetric form of the game, although a solution to the general case has been found by Bj\"orn Thal\'en. In the symmetric form it turns out that the greedy rule is superfluous, since a player will always want to win the trick unless there is no choice. 

The most important feature of a suit is the location of the deuce. A player who has the deuce in some suit is said to have an \emph{exit} in that suit. Exits don't add, you just have one or you don't. The quality of an exit is determined by the number of \emph{stoppers} that the player with the deuce has to protect it. The number of stoppers in a suit is the maximum number of tricks that the player can conceivably take in that suit, given that they have to save the deuce for the last trick. More precisely, remove the lowest card from each of the two hands, and pair up the remaining cards so that the player who had the deuce wins the maximum number of comparisons. That number is the number of stoppers protecting the exit.

For example consider the following deal:
\begin{equation}
\begin{array}{l l}
West:  & East:\\
\heartsuit \,\text{ K 2} \qquad & \heartsuit \, \text{ A Q}\\
\diamondsuit \,\text{ A J 10} \qquad & \diamondsuit \,\text{ K Q 2}\\
\clubsuit \,\text{ A Q 2} \qquad & \clubsuit \,\text{ K J 10}\\
\end{array}
\end{equation}

In hearts West has an exit, but if East leads the ace, West will have to play the king, and herefore the exit is unprotected (no stopper). In diamonds East has an exit protected by one stopper, and in clubs, West has an exit protected by two stoppers. 

Stoppers add like integers, in other words the relevant number is your total number of stoppers minus your opponent's total number of stoppers.

The outcome of the game is determined by the exits and the stoppers. The stoppers form a group isomorphic to $\mathbf{Z}$, while the exits form a lattice $B_2$ of four elements encoding the information for each of East and West whether they have an exit or not. Thus the outcome of the game can be found from the mapping of deals to $\mathbf{Z}\oplus B_2$. The player with more stoppers will win. If stoppers add to zero, the player with an exit will win over a player without exit, while if both players have exits, the player who starts will win. If nobody has an exit, the deal must be empty and the player who starts must already have won the last trick and lost the game.

Just as for mis\`ere nim there are superfluous elements in this monoid, since one cannot have a stopper without having an exit. There is also superfluous information, since if one player has a protected stopper, any unprotected stoppers for the other player become irrelevant. Therefore the quotient monoid is obtained by disregarding exits if the stoppers add to a nonzero number. More precisely, let $E$ be the quotient of $\mathbf{Z}\oplus B_2$ obtained by letting $(m,x)\equiv (m,y)$ whenever $m\neq 0$. Then symmetric mis\`ere five-card is isomorphic to $E$.
 
\subsection{Symmetric whist}
In \emph{whist}, the objective is to take as many tricks as possible. This game was studied in the single-suit case in \cite{JW1} and under the symmetry assumption in \cite{JW2}. See also \cite{W05} where the theory is applied to some unusual endgames in bridge. We do not state the results in detail here, but some of its features are worth commenting on. In \cite{JW2} the game was solved in the sense that its algebraic structure (as defined in this paper) was determined. Remarkably, this structure is the product of the group $\mathbf{Z}_{(2)}$ with the monoid $E$ of the previous section. In \cite{JW2} the elements of $E$ are called \emph{infinitesimals} because of their role in whist: The number of tricks that a player gets with optimal play is determined by rounding the value in $\mathbf{Z}_{(2)}$ to the nearest integer, with the tiebreak rule exactly the same as the rule for deciding the outcome in symmetric mis\`ere five-card. The rule is thus similar to the one for integral hackenbush, and the similarity even goes further: By introducing a bonus of a quarter for not taking the last trick, the outcome is obtained by rounding to nearest number with denominator 4.

Another fact that emerges from the analysis in \cite{JW2} is that the algebraic structure of whist is independent of whether the game is played with the greedy rule or not, although this rule definitely changes the outome of many deals. Moreover, the solution in \cite{JW2} is strictly speaking not effective in the computational sense, and several questions about the set of values that occur for single-suit deals are left unanswered. It seems that the overall character of the game determines its algebraic structure in a way which is independent of questions about the values of particular deals, and which is robust even under changes of the rules that perturb the actual mapping of deals to values. 

\subsection{General two-person whist}
Two-person whist without the assumption of symmetry seems to fall outside the scope of the current theory. The fact that one may force the opponent to discard by leading a suit where the opponent is void (in bridge terminology, execute a \emph{squeeze}) makes the analysis considerably more complicated. The following conjecture is worth mentioning:

\begin{Conjecture}
A higher card is always at least as good as a smaller card in the same suit. 
\end{Conjecture}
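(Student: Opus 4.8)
\medskip
\noindent\textbf{A possible line of attack.} To make the statement precise, fix a suit $s$ and a deal $D$ in which West holds a card $x$ of suit $s$ and East holds a card $y$ of the same suit with $\mathrm{rank}(y)>\mathrm{rank}(x)$, and let $D'$ be the deal obtained from $D$ by exchanging the owners of $x$ and $y$, so that in $D'$ West holds the higher of the two cards. Writing the outcome of a deal as a pair $(m,n)$, where $m$ and $n$ are the numbers of tricks won by West under optimal play with West, respectively East, on lead, the conjecture asks that the outcome of $D'$ dominate that of $D$ coordinatewise. It is enough to treat this elementary swap; the general comparison of two cards that West might hold in a given position follows by composing such steps together with a harmless relabelling of ranks.

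The plan is to argue by induction on the total number of cards, using a strategy-mimicking (``strategy-stealing'') argument. Fix an optimal strategy $\sigma$ for West in $D$, and let West, in $D'$, maintain a shadow copy of $D$ played under $\sigma$. First I would fix the obvious suit-preserving bijection between the cards of $D'$ and those of $D$ --- the identity except that it interchanges $x$ and $y$ --- and play as follows: whenever it is West's turn in $D'$, West plays the image under the bijection of the card that $\sigma$ prescribes in the shadow position, and each card played by East in $D'$ is translated back along the bijection to a (legal, since legality concerns only suits) reply in the shadow. As long as neither $x$ nor $y$ has been played, the two plays run in lockstep: the same player leads each trick, the same player wins it --- because the bijection preserves ranks within every suit except on the pair $\{x,y\}$, which is still in hand --- and West's running trick count agrees in the two games. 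So the decisive moment is the first trick $t$ in which one of $x$, $y$ is played (or, under a squeeze, discarded); after $t$ the residual card distributions of $D$ and $D'$ still differ, and the shadow can no longer be continued verbatim.

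The hard part will be to control this point of divergence and to re-couple the two games for the remaining tricks. One wants to show that immediately after trick $t$ West in $D'$ is no worse off than the shadow West in $D$ --- playing or being forced to expose the higher card $y$ can only win a trick that $x$ would lose, or force a less damaging discard --- and, crucially, that the residual deal of $D'$ is \emph{itself} an ``improvement'' of the residual deal of $D$, so that the induction hypothesis applies. Making this last clause precise is the real obstacle: once $x$ or $y$ has been played or discarded the two residual positions are no longer related by a clean swap of two cards, so I expect that one must strengthen the inductive statement to a genuine domination relation between deals --- one closed under both elementary swaps and the asymmetric upgrades that a squeeze can manufacture --- and then check that a single trick (a lead, a reply, removal of the two played cards, transfer of the lead to the winner) carries this relation into itself. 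The squeeze is exactly what makes this step delicate: leading a suit in which the opponent is void trades a rank advantage in one suit for a forced discard in another, and one has to rule out a pathological chain of such discards that could defeat the monotonicity. Whether this can happen is, as far as I can see, the entire content of the conjecture.
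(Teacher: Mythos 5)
You have not proved anything, and to be fair you do not claim to: the statement is given in the paper as a \emph{Conjecture}, with no proof. The paper's only comment on it is precisely a warning against the route you take: it notes that a strategy-stealing (strategy-mimicking) argument of the kind you sketch is known from Kahn, Lagarias and Witsenhausen to work in the single-suit case but \emph{not} in general, and that the conjecture is equivalent to the assertion that having the lead can cost at most one trick compared with not having it. So your shadow-play construction is the known partial argument, and the step you yourself flag as ``the real obstacle'' --- re-coupling the two games after the first trick in which $x$ or $y$ is played or discarded, especially when a squeeze forces discards --- is not a technical detail to be filled in later; it is the entire open content of the conjecture.

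Concretely, the gap is your proposed strengthened induction hypothesis: a ``domination'' relation between deals, closed under elementary swaps and under the asymmetric upgrades produced by forced discards, and preserved when a single trick is played. You never define this relation, let alone verify that one trick carries it into itself, and this is exactly where the naive mimicking breaks: after a squeeze the residual deals of $D$ and $D'$ are no longer related by a card-for-card improvement in West's favour, the mirrored reply may be a strictly worse discard, and the lead may end up with different players in the two games --- at which point one needs to know how much the lead itself is worth, which is the equivalent unsolved question the paper points to. Until you can formulate and prove the invariance of such a domination relation (or equivalently bound the value of the lead by one trick), the proposal remains a plan for a proof of an open problem, not a proof.
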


More precisely, if a card on say West's hand is removed and replaced by a higher card in the same suit, then West will be able to take at least as many tricks as in the original position. There is a strategy-stealing argument \cite{KLW1} that works for one suit but not in general, and it turns out that the conjecture is equivalent to the statement that having the lead can cost at most one trick compared to not having the lead in the same position.

\section{Comparison of the monoid theory with the classical theory}
At a superficial level, the main difference between the general algebaic approach taken in this paper and on the other hand the classical theory of \cite{WW, ONAG} is that the equivalence classes of game states form a monoid rather than a group. 

At a deeper level, we can say that while the classical theory focuses on how to add games and how to play several games at once, the approach taken here is to split game positions into components. In the classical theory, games are identified with their starting positions, and any two games can be added. Here we do not identify a game with its starting position. Instead, we regard a game as specifying a set of positions, and those positions can be added to each other only within the limits of the game. In our approach, there is no meaning in adding positions from different games. 

A fundamental difference between the theories, as noted in \cite{P05} and \cite{JW2}, is that in the classical theory the game equivalence classes and the algebraic identities between them are \emph{intrinsic}. To decide whether or not two games $G$ and $H$ are equivalent (in \cite{ONAG}, the relation that we here call \emph{equivalence} is called as \emph{equality}), all we have to do is play the sum $G+H$. If this sum is a second-player win, $G$ and $H$ are equivalent. The fact that this holds if and only if for every game $K$, $G+K$ belongs to the same outcome class as $H+K$ is then a nontrivial theorem.

Here, as well as in \cite{P05, PS08, P, Propp, JW2}, we have taken the latter property as our definition of equivalence. As was discussed in \cite{P05} and \cite{JW2}, this means that to decide whether two positions $G$ and $H$ are equivalent, we have to consider all possibilities for $K$. Hence even if the positions $G$ and $H$ are described by finite combinatorial structures, it is not obvious whether there is a decision procedure for determining whether or not equivalence holds. Moreover, if we extend the game to a larger set of positions, then two positions that are equivalent in the smaller game can turn out not to be equivalent in the larger game.  

\section{Non-commutative game theory} In \cite{WW, ONAG}, a theory is developed in which the games are elements of a group. In this paper we have shown that by relaxing the conditions, allowing for games that correspond to abelian monoids, we can incorporate a number of other games into the theory. One may ask why we should stop at abelian monoids. Are there alternative game theories where games correspond to other algebraic structures? In \cite{AN1}, a game called \emph{End-nim} is solved. This game, which comes in two forms, impartial and partizan, is played on a set of piles of counters, but the piles are ordered from left to right, and the ordering is essential to the game. It would be possible, and quite natural, to consider the set of positions as a non-commutative monoid. It is certainly possible to express the solution given in \cite{AN1} in this language. This example in itself does not motivate it, but it would be possible to develop a ``non-commutative game theory'' for counter pick-up games played on ordered sequences of piles.

\end{document}